\numberwithin{equation}{section}
\newtheorem{theorem}{Theorem}[section]
\newtheorem{lemma}[theorem]{Lemma}
\newtheorem{corollary}[theorem]{Corollary}
\newtheorem{conjecture}[theorem]{Conjecture}
\theoremstyle{definition}
\newtheorem{remark}[theorem]{Remark}
\newtheorem{example}[theorem]{Example}
\newcommand{\popo}{\mathbb{P}^1 \times \mathbb{P}^1}
\newcommand{\pr}{\mathbb{P}}
\begin{document}


\title[Asymptotic resurgences]{
Asymptotic resurgences for ideals of positive dimensional 
subschemes of projective space}

\author{Elena Guardo}
\address{Dipartimento di Matematica e Informatica\\
Viale A. Doria, 6 - 95100 - Catania, Italy}
\email{guardo@dmi.unict.it}
\urladdr{http://www.dmi.unict.it/$\sim$guardo/}

\author{Brian Harbourne}
\address{Department of Mathematics \\
University of Nebraska--Lincoln\\
Lincoln, NE 68588-0130, USA }
\email{bharbour@math.unl.edu}
\urladdr{http://www.math.unl.edu/$\sim$bharbourne1/}

\author{Adam Van Tuyl}
\address{Department of Mathematical Sciences \\
Lakehead University \\
Thunder Bay, ON P7B 5E1, Canada}
\email{avantuyl@lakeheadu.ca}
\urladdr{http://flash.lakeheadu.ca/$\sim$avantuyl/}

\keywords{symbolic powers, monomial ideals, points, lines, resurgence}
\subjclass[2000]{13F20, 13A15, 14C20}
\thanks{February 29, 2012}

\begin{abstract}
Recent work of Ein-Lazarsfeld-Smith and Hochster-Huneke 
raised the problem of determining which symbolic powers of an ideal
are contained in a given ordinary power of the ideal.
Bocci-Harbourne defined a quantity called the resurgence to address this problem
for homogeneous ideals in polynomial rings, with
a focus on zero dimensional 
subschemes of projective space; the methods and results
obtained there have much less to say about higher dimensional subschemes. 
Here we take the first steps toward extending this work to higher dimensional
subschemes. We introduce new asymptotic
versions of the resurgence and obtain upper and lower bounds
on them for ideals of smooth subschemes, 
generalizing what is done in \cite{BH}. We apply these bounds
to ideals of unions of general lines in $\pr^N$.
We also pose a Nagata type conjecture for symbolic powers of ideals of
lines in $\pr^3$.
\end{abstract}

\maketitle



\section{Introduction}

Refinements of the groundbreaking
results of \cite{refELS, HaHu} have recently been given in \cite{BH, BH2, BCH, HH}. 
The resurgence, introduced in \cite{BH} as a way of quantifying
which symbolic powers of an ideal are contained in a given power of the ideal,
is especially useful for studying homogeneous ideals defining
zero dimensional subschemes of projective space.  However, except for
special cases such as projective cones over points or when the resurgence is equal to 1
(such as for complete intersections), there are no ideals of positive dimensional subschemes
for which the resurgence is known. In order to extend this work
to higher dimensional subschemes, we introduce asymptotic 
refinements of the resurgence better adapted to studying ideals $I$ defining 
higher dimensional subschemes of projective space, but as a tradeoff 
we need to require that the subschemes be smooth.
Our main result gives upper and lower bounds
on these asymptotic resurgences in terms of three numerical characters of $I$
(viz., the least degree $\alpha(I)$ and largest degree $\omega(I)$
in a minimal homogeneous set of generators of $I$,
and the asymptotic invariant $\gamma(I)=\lim_{m\to\infty}\alpha(I^{(m)})/m$,
where $I^{(m)}$ is the $m$th symbolic power of $I$). We apply this result to 
obtain explicit bounds in the case of ideals of unions of general lines in $\pr^N$ for $N\geq 3$,
in some cases reducing the exact determination of an asymptotic resurgence
to determining $\gamma(I)$.

Throughout this paper, we work over an algebraically closed field $k$,
of arbitrary characteristic. Let $(0)\subsetneq I\subsetneq (1)$ be a homogeneous
ideal in the homogeneous coordinate
ring $k[\pr^N]=k[x_0,\ldots,x_N]$ of $\pr^N$.  Then
\[\rho(I) = \sup\left\{m/r ~:~ I^{(m)} \not\subseteq I^r\right\}\]
is called the {\em resurgence} of $I$ \cite{BH}
(see section \ref{prelims} below for the definiton of the symbolic power $I^{(m)}$).
By definition, if $m$ and $r$ are positive integers such that $m/r>\rho(I)$, then
$I^{(m)}\subseteq I^r$. However, $\rho(I)$ is often very hard to compute.
Even showing that $\rho(I)$ is finite is not easy, but it follows from
\cite{refELS,HH} that $\rho(I)\le N$, and more generally that
$\rho(I)\le h_I$ where $h_I$ is the maximum height of an associated prime of $I$.

Lower bounds for $\rho(I)$ were given in \cite{BH}.
For each $1\leq h\leq N$, these bounds show that ideals $I$ with $h_I = h$ can be found where
$\rho(I)$ is arbitrarily close to $h$
(in particular, the supremum of the values of $\rho(I)$ over all homogeneous ideals
$I$ with $h_I = h$ is $h$; we note that
no examples are known for which $\rho(I)$ is equal to $h$ when $h > 1$).
In some situations, \cite{BH} gives an exact value for $\rho(I)$ by giving
an upper bound on $\rho(I)$ which coincides with the lower bound.  However,
these upper bounds require that $I$ define a zero-dimensional subscheme of
$\pr^N$ (or a cone over a finite set of points; see \cite[Proposition 2.5.1(b)]{BH}).
The only other cases of ideals of positive dimensional subschemes
where upper bounds other than $\rho(I)\leq h_I$ are known
occur for ideals with $I^{(r)}=I^r$ for all $r\geq 1$ (such as for complete intersections;
see section \ref{prelims}),
in which case the resurgence is 1.
Thus, much less is known about values of $\rho(I)$ when $\rho(I)$ is not known to be 1
and when the subscheme
that $I$ defines is positive dimensional but not a cone over a finite set of points.

\subsection{Discussion of results}

Given the difficulty in finding $\rho(I)$, it is worth looking
at variants of the resurgence. For a homogeneous ideal $(0)\subsetneq I\subsetneq k[\pr^N]$,
we introduce an {\em asymptotic resurgence}, which we define as
\[\rho_a(I)=\sup\{m/r: I^{(mt)}\not\subseteq I^{rt} \text{ for all }t\gg0\}.\]
We will also consider an additional asymptotic version of the resurgence.
We have 
\[\rho'_a(I)=\limsup_{t\to\infty}\rho(I,t),\]
where $\rho(I,t)=\sup\{m/r: I^{(m)}\not\subseteq I^{r}, m\geq t, r\geq t\}$.

\begin{remark}\label{introrem}
Clearly $m/r>\rho_a(I)$ implies $I^{(mt)}\subseteq I^{rt}$ for 
infinitely many $t>0$. Moreover, $m/r<\rho_a(I)$ implies
that $I^{(mt)}\not\subseteq I^{rt}$ for infinitely many $t>0$.
(If $m/r<\rho_a(I)$, then there is a pair $(m',r')$ such that
$m/r\leq m'/r'\leq \rho_a(I)$ with $I^{(m't)}\not\subseteq I^{r't}$ for all $t\gg0$,
and so $I^{(mm't)}\not\subseteq I^{r'mt}$ for all $t\gg0$.
But $r'm\leq m'r$ so $I^{rm't}\subseteq I^{r'mt}$, hence
$I^{(mm't)}\not\subseteq I^{rm't}$ for all $t\gg0$, so
$I^{(mt)}\not\subseteq I^{rt}$ for infinitely many $t>0$.)

Moreover, $m/r>\rho'_a(I)$ implies $I^{(m')}\subseteq I^{r'}$ for all $m'\gg0$, $r'\gg0$
such that $m'/r'\geq m/r$, while $m/r<\rho'_a(I)$ implies 
there are infinitely many pairs $(m',r')$ with $m/r\leq m'/r'\leq \rho'_a(I)$
such that $I^{(m')}\not\subseteq I^{r'}$.
\end{remark}

Our main result is Theorem \ref{mainThm}; it gives both upper and lower bounds on
these asymptotic versions of the resurgence for certain ideals.
Our lower bound is the same as that obtained in \cite{BH},
and applies to any nontrivial homogeneous ideal.
Our upper bound is closely related to that of \cite{BH}
but applies to the ideal of any smooth subscheme of $\pr^N$,
whereas the upper bound on the resurgence given
in \cite{BH} applies only to ideals defining zero-dimensional subschemes
(although the subschemes do not need to be smooth).

To state the result, given a homogeneous ideal $(0)\neq I\subseteq k[\pr^N]$,
we define $\alpha(I)$ to be the least degree
of a nonzero element of $I$, $\omega(I)$ to be the largest degree
in a minimal homogeneous set of generators of $I$,
and $\gamma(I)$ to be $\lim_{m\to\infty}\alpha(I^{(m)})/m$; see section \ref{prelims}
for basic facts about $\gamma(I)$.
We remark that whereas bound (1) in our result below is what was actually 
proved (but not stated) in \cite{BH}, bound (2)
generalizes the upper bound obtained in \cite{BH}.
Note by \cite[Lemma 2.3.3(a)]{BH} that $I^{(m)}\not\subseteq I^r$ if $m<r$ 
for a homogeneous ideal $(0)\neq I\neq (1)$, and hence if $I^{(m)}=I^m$ for all $m\geq 1$,
then $I^{(m)}=I^m\subseteq I^r$ if and only if $r\leq m$, so $\rho_a(I)=\rho'_a(I)=\rho(I)=1$; 
bound (3) generalizes this fact. This generalization applies when there is a $c$ such that
$I^{(cm)}=(I^{(c)})^m$ for all $m\geq 1$, which occurs, for example, 
whenever the symbolic power algebra
$\bigoplus_jI^{(j)}$ is Noetherian; see \cite[Proposition 2.1]{refSc} or \cite{refR}.

\begin{theorem}\label{mainThm}
Consider a homogeneous ideal $(0)\neq I\subsetneq k[\pr^N]$.
Let $h=\min(N,h_I)$ where $h_I$ is the maximum of the heights of 
the associated primes of $I$.
\begin{enumerate}
\item[(1)] We have $1\leq \alpha(I)/\gamma(I)\leq\rho_a(I)\leq\rho_a'(I)\leq\rho(I)\leq h$.
\item[(2)] If $I$ is the ideal of a (non-empty) smooth subscheme of $\pr^N$, then
$$\rho_a(I)\le \frac{\omega(I)}{\gamma(I)}\leq \frac{\operatorname{reg}(I)}{\gamma(I)}.$$
\item[(3)] If for some positive integer $c$ we have $I^{(cm)}=(I^{(c)})^m$ for all $m\geq 1$,
and if $I^{(c)}\subseteq I^b$ for some positive integer $b$, then
$$\rho'_a(I)\leq \frac{c}{b}.$$
\end{enumerate}
\end{theorem}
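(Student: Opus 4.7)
The proof naturally splits into the three bounds. For bound (1), I would first observe $\gamma(I)\leq\alpha(I)$ from the inclusion $I^m\subseteq I^{(m)}$ (which gives $\alpha(I^{(m)})\leq m\alpha(I)$; dividing by $m$ and letting $m\to\infty$ yields the claim). For $\alpha(I)/\gamma(I)\leq\rho_a(I)$, pick $m/r<\alpha(I)/\gamma(I)$, i.e., $m\gamma(I)<r\alpha(I)$. Using $\alpha(I^{(mt)})/t\to m\gamma(I)$, for $t\gg 0$ one has $\alpha(I^{(mt)})<rt\alpha(I)=\alpha(I^{rt})$, so $I^{(mt)}$ contains a form too low-degree to lie in $I^{rt}$, showing $I^{(mt)}\not\subseteq I^{rt}$ and hence $m/r\leq\rho_a(I)$. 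The chain $\rho_a(I)\leq\rho_a'(I)\leq\rho(I)$ follows by inspection of the definitions: any witness $(m,r)$ to $\rho_a(I)$ yields, for every $s$, a pair $(mt,rt)$ with $mt,rt\geq s$ and $I^{(mt)}\not\subseteq I^{rt}$, forcing $\rho(I,s)\geq m/r$; and tautologically $\rho(I,t)\leq\rho(I)$. Finally, $\rho(I)\leq h$ combines the Ein--Lazarsfeld--Smith/Hochster--Huneke containment $I^{(h_I r)}\subseteq I^r$ with the easy observation $\rho(I)\leq N$.

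For bound (2), suppose $m/r>\omega(I)/\gamma(I)$, equivalently $m\gamma(I)>r\omega(I)$. The goal is to show $I^{(mt)}\subseteq I^{rt}$ for $t\gg 0$. The asymptotic $\alpha(I^{(mt)})\sim mt\gamma(I)$ guarantees that for $t\gg 0$ every minimal generator of $I^{(mt)}$ has degree exceeding $rt\omega(I)$. The crucial non-formal step, which is where the smoothness hypothesis enters, is a containment result of the following shape: if $X$ is smooth and $F\in I^{(m')}$ has degree sufficiently large compared with $r'\omega(I)$, then $F\in I^{r'}$. Locally at every point of a smooth $X$ the ideal $I$ is generated by a regular sequence, so symbolic and ordinary powers coincide there; the task is then to promote this local equality to a global containment via a degree-controlled factorization through the generators of $I$ (which have degree at most $\omega(I)$). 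This generalizes the zero-dimensional argument of Bocci--Harbourne, which avoided positive-dimensional components entirely. The second inequality $\omega(I)/\gamma(I)\leq\reg(I)/\gamma(I)$ is immediate from $\omega(I)\leq\reg(I)$.

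For bound (3), fix any $m/r>c/b$ and consider pairs $(m',r')$ with $m',r'\geq t$ and $m'/r'\geq m/r$. Writing $m'=cq+s$ with $0\leq s<c$, the hypotheses give $I^{(m')}\subseteq I^{(cq)}=(I^{(c)})^q\subseteq I^{bq}$, so it suffices that $bq\geq r'$. Now $bm'-cr'\geq r'(bm/r-c)$ with $bm/r-c>0$; this grows linearly in $r'$, while the bounded error $bs<bc$ is fixed. For $t$ large the linear term dominates, giving $b(m'-s)\geq cr'$ and hence $bq\geq r'$. Thus $\rho(I,t)\leq m/r$ for all $t\gg 0$, so $\rho_a'(I)=\limsup_t\rho(I,t)\leq m/r$; taking the infimum over $m/r>c/b$ yields $\rho_a'(I)\leq c/b$.

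The main obstacle is clearly the smoothness-based containment step underlying (2); the remaining pieces are a mix of definition-chasing and elementary asymptotic arithmetic, and I expect them to go through essentially as sketched.
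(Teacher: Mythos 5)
Parts (1) and (3) of your proposal are correct and follow essentially the same route as the paper: (1) combines $\gamma(I)\leq\alpha(I)$, the degree comparison $\alpha(I^{(mt)})<\alpha(I^{rt})$ for $t\gg0$, the definitional chain $\rho_a\leq\rho_a'\leq\rho$, and the Ein--Lazarsfeld--Smith/Hochster--Huneke containment; (3) is the same modular-arithmetic estimate the paper uses (the paper phrases it as: $r\leq mb/c-b$ forces $I^{(m)}\subseteq I^r$, hence $\rho(I,t)\leq c/b+c/t$).

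Part (2) has a genuine gap. You correctly isolate the needed lemma (``for $X$ smooth, $F\in I^{(m')}$ of degree large relative to $r'\omega(I)$ lies in $I^{r'}$'') and correctly identify smoothness as the operative hypothesis, but the mechanism you sketch --- ``promote the local equality $I^{(m)}_{\mathfrak p}=I^m_{\mathfrak p}$ to a global containment via a degree-controlled factorization through the generators of $I$'' --- is not an argument and does not point toward one; high-degree elements of $I^{(m')}$ do not factor through generators of $I$ in any evident way. What actually happens in the paper is different in character and uses two specific ingredients you do not mention. First, because a smooth $X$ is locally a complete intersection, the powers $I^m$ are unmixed apart from a possible $M$-primary component, so $I^{(m)}$ is exactly the \emph{saturation} of $I^m$; hence $(I^{(m)})_\ell=(I^m)_\ell$ for all $\ell\geq\operatorname{satdeg}(I^m)$, and $\operatorname{satdeg}(I^m)\leq\operatorname{reg}(I^m)$. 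Second, Kodiyalam's theorem gives $\operatorname{reg}(I^r)\leq\lambda_I r+c_I$ with $\lambda_I\leq\omega(I)$, i.e.\ a \emph{linear} bound with slope at most $\omega(I)$ (a constant $c_I$ must be carried along, which is why the argument is asymptotic in $t$). With these in hand, $m/r>\omega(I)/\gamma(I)$ (which also forces $m\geq r$, so $I^{(mt)}\subseteq I^{(rt)}$) yields, for $t\gg0$,
$$\alpha(I^{(mt)})\geq mt\,\gamma(I)>rt\,\omega(I)+c_I\geq\operatorname{reg}(I^{rt})\geq\operatorname{satdeg}(I^{rt}),$$
so $(I^{(mt)})_\ell$ is $0$ below $\operatorname{satdeg}(I^{rt})$ and is contained in $(I^{(rt)})_\ell=(I^{rt})_\ell$ at or above it; hence $I^{(mt)}\subseteq I^{rt}$. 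Without the saturation identification and the linear regularity bound, your sketch cannot be completed, so (2) as you have written it does not constitute a proof.
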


As an application of Theorem \ref{mainThm}(1,2), we obtain the following result:

\begin{corollary}\label{introcor1}
Let $I$ be the ideal of $s$ general lines in $\pr^N$ for $N\geq 3$, where 
$s=\binom{t+N}{N}/(t+1)$ for any integer $t\geq 0$ such that $s$ is an integer
(there are always infinitely many such $t$; for example, let $t=p-1$ for a prime $p>N$).
Then $\rho_a(I)=(t+1)/\gamma(I)$.
\end{corollary}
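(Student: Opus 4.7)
The plan is to apply Theorem \ref{mainThm} parts (1) and (2). A union of $s$ general lines in $\pr^N$ (with $N \geq 3$) is pairwise disjoint and hence a nonempty smooth subscheme, so the theorem gives
\[
\frac{\alpha(I)}{\gamma(I)} \;\leq\; \rho_a(I) \;\leq\; \frac{\omega(I)}{\gamma(I)}.
\]
Thus the corollary is reduced to showing $\alpha(I) = \omega(I) = t+1$, which forces the chain of inequalities to collapse to $(t+1)/\gamma(I)$.

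To see $\alpha(I) = t+1$, I would invoke the Hartshorne--Hirschowitz-type postulation theorem for general lines, which asserts that for a general union $L$ of $s$ lines,
\[
h^0(\mathcal{I}_L(d)) \;=\; \max\bigl\{0,\ \tbinom{d+N}{N} - s(d+1)\bigr\}.
\]
The hypothesis $s(t+1) = \binom{t+N}{N}$ makes the right-hand side vanish at $d = t$, so $I$ has no nonzero elements in degrees $\leq t$. In degree $t+1$, a routine binomial computation gives $h^0(\mathcal{I}_L(t+1)) = s(N-1)$, which is positive for $N \geq 3$, yielding $\alpha(I) = t+1$.

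For $\omega(I) = t+1$, note that $\omega(I) \geq \alpha(I) = t+1$, so one only needs $\omega(I) \leq t+1$, i.e.\ that $I$ is generated in degree $t+1$. Equivalently, it suffices to show $\reg(I) \leq t+1$ and then use $\omega(I) \leq \reg(I)$. In the balanced case $s(t+1) = \binom{t+N}{N}$, the Hilbert function of $L$ meets the Hilbert polynomial $s(d+1)$ from degree $t$ onward, and the ideal of $s$ general lines is generated in a single degree; concretely, this amounts to verifying that the multiplication maps $I_{t+1} \otimes k[\pr^N]_d \to I_{t+1+d}$ are surjective for all $d \geq 1$, which follows from the same postulation/specialization arguments that prove the Hilbert-function statement above.

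The main obstacle is precisely this last generation claim: knowing the Hilbert function pins down $\alpha(I)$ but not $\omega(I)$, so the heart of the argument is the stronger statement that for general lines in the balanced case, $I$ has generators only in degree $t+1$. Granting this, combining the bounds above gives $\rho_a(I) = (t+1)/\gamma(I)$.
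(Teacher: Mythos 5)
Your overall strategy is the same as the paper's: squeeze $\rho_a(I)$ between $\alpha(I)/\gamma(I)$ and $\omega(I)/\gamma(I) \le \reg(I)/\gamma(I)$ using Theorem \ref{mainThm}(1,2), compute $\alpha(I)=t+1$ from the Hartshorne--Hirschowitz postulation theorem, and then close the gap by showing $\reg(I)\le t+1$. Your computation of $\alpha(I)$ is correct and matches the paper's.

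The genuine gap is exactly the one you flag and then wave away: you assert that surjectivity of $I_{t+1}\otimes k[\pr^N]_d\to I_{t+1+d}$ (equivalently, $\reg(I)\le t+1$) ``follows from the same postulation/specialization arguments that prove the Hilbert-function statement.'' That is not accurate. The Hartshorne--Hirschowitz maximal-rank theorem controls only $h^0$ and $h^1$ of $\mathcal{I}_X(d)$ (it tells you the restriction $\Gamma(\mathcal{O}_{\pr^N}(i))\to\Gamma(\mathcal{O}_X(i))$ is surjective for $i\ge t$, i.e.\ $h^1(\mathcal{I}_X(i))=0$ for $i\ge t$). Castelnuovo--Mumford regularity at level $t+1$ additionally requires $h^j(\mathcal{I}_X(t+1-j))=0$ for $j\ge 2$, which is not a postulation statement. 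The way to finish (and the way the paper does it) is to use the exact sequence $0\to\mathcal{I}_X(i)\to\mathcal{O}_{\pr^N}(i)\to\mathcal{O}_X(i)\to 0$: since $X$ is a disjoint union of lines, $h^j(\mathcal{O}_X(i))=0$ for $j\ge 2$ and for $j=1$, $i\ge -1$, and $h^j(\mathcal{O}_{\pr^N}(i))=0$ for $1\le j\le N-1$, so the higher cohomology of $\mathcal{I}_X$ in the relevant range vanishes essentially for free. Combining this with $h^1(\mathcal{I}_X(i))=0$ for $i\ge t$ and invoking a standard regularity criterion (e.g.\ \cite[Exercise 20.20]{refE}) gives $\reg(I)=t+1$, which then forces $\omega(I)=t+1$ and completes the argument. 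So the skeleton of your proof is right, but the step you label the ``heart of the argument'' needs this explicit cohomological input rather than an appeal to postulation.
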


Here are examples demonstrating Theorem \ref{mainThm}(3).

\begin{example}
Let $I$ be the ideal of $n$ general points in $\pr^2$.
If $n=6$, we have $I^{(10m)}=(I^{(10)})^m$ \cite[end of Section 3]{HaHu} and $I^{(10)}\subseteq I^{8}$ 
\cite[Proposition 4.1]{BH2}
so $\rho'_a(I)\leq 10/8=5/4$ by Theorem \ref{mainThm}(3).
In fact, $\rho(I)=5/4$ by \cite[Proposition 4.1]{BH2},
so this also follows from Theorem \ref{mainThm}(1).
Moreover, $I^{(m)}\not\subseteq I^r$ if and only if $m<5r/4-5/12$ by \cite[Proposition 4.1]{BH2}.
It follows that $I^{(mt)}\not\subseteq I^{rt}$ for all $t\gg0$ whenever
$m/r<5/4$ so $5/4\leq \rho_a(I)$ and hence we have $\rho_a(I)=\rho'_a(I)=\rho(I)=5/4$.
Similarly, if $n=7$ we have $I^{(24m)}=(I^{(24)})^m$ 
\cite{HaHu} and $I^{(24)}\subseteq I^{21}$ 
\cite[Proposition 4.3]{BH2}
so $\rho'_a(I)\leq 24/21=8/7$ (here also we have 
$\rho_a(I)=\rho'_a(I)=\rho(I)=8/7$ since 
$I^{(m)}\not\subseteq I^r$ if $m<8r/7$ for $m>1$ by \cite[Proposition 4.3]{BH2}, so
it follows that $I^{(mt)}\not\subseteq I^{rt}$ for all $t\gg0$ whenever
$m/r<8/7$).
Finally, if $n=8$, then $I^{(102m)}=(I^{(102)})^m$ for all $m\geq 1$ by \cite{HaHu}
and $I^{(102)}\subseteq I^{72}$ by \cite[Proposition 4.4]{BH2}, so 
$\rho'_a(I)\leq 102/72=17/12$ (and again 
$\rho_a(I)=\rho'_a(I)=\rho(I)=17/12$, since  
$I^{(m)}\not\subseteq I^r$ if $m<17r/12-1/3$ for $m>1$ by 
\cite[Proposition 4.4]{BH2}).
\end{example}

It is in general an open problem to compute $\gamma(I)$. It is often
challenging just to find good lower bounds for $\gamma(I)$; see \cite{refEV, HaHu, refSch}.
Thus Corollary \ref{introcor1} shows that the problem of 
computing $\rho_a(I)$ is related to a significant open problem. Although it can 
be difficult to determine $\gamma(I)$ exactly, it is possible in principle to estimate it
to any desired precision; see section \ref{prelims}.
In some cases, however, such as for ideals of $s$
general lines in $\pr^N$ for small values of $s$, determining $\gamma(I)$ 
and $\rho(I)$ (and also $\rho_a(I)$ and $\rho'_a(I)$) is much easier:

\begin{theorem}\label{introthm2}
Let $I$ be the ideal of $s$ general lines in $\pr^N$ for $N\geq 2$ and
$s\leq (N+1)/2$. Then $\rho(I)=\rho'_a(I)=\rho_a(I)=\max(1,2\frac{s-1}{s})$. Moreover, if $2s<N+1$,
then $\gamma(I)=1$, while if $2s=N+1$, then $\gamma(I)=\frac{N+1}{N-1}$.
\end{theorem}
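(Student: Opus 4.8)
The plan is to treat the two ranges $2s < N+1$ and $2s = N+1$ separately, since the value of $\gamma(I)$ differs, and to compute each of $\gamma(I)$, $\alpha(I)$, $\omega(I)$, and then feed these into Theorem \ref{mainThm}. First I would establish the basic numerical data for $I$, the ideal of $s$ general lines in $\pr^N$. A union of $s$ general lines spans a linear subspace of dimension $\min(2s-1,N)$, so when $2s \le N+1$ the lines are \emph{linearly general}: there is a change of coordinates putting them in a normal form (e.g. the $j$-th line is the span of two coordinate points). In this range the ideal $I$ is generated by quadrics and linear forms in a way that makes it, up to the linear forms cutting out the span, essentially a monomial complete-intersection-like ideal; in particular I expect $\alpha(I) = 1$ if the lines fail to span (i.e. $2s < N+1$) and $\alpha(I) = 2$ if $2s = N+1$, and similarly $\omega(I) = 2$. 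The computation of $\gamma(I) = \lim_m \alpha(I^{(m)})/m$ is the first substantive point: when $2s < N+1$ there is a hyperplane (indeed a linear form) vanishing on all $s$ lines, hence on every symbolic power, giving $\alpha(I^{(m)}) \le m$ and so $\gamma(I) \le 1$; combined with $\gamma(I) \ge 1$ from Theorem \ref{mainThm}(1) (or $\alpha(I^{(m)}) \ge m$ directly) this forces $\gamma(I) = 1$. When $2s = N+1$ the lines span, the relevant object is a "star configuration"-type union, and I would compute $\alpha(I^{(m)})$ by a direct dimension count on forms of degree $d$ having multiplicity $\ge m$ along each of $s$ general lines in $\pr^{2s-1}$ — the expected dimension is $\binom{d+N}{N} - s\binom{m+1}{2}\cdot(\text{normal data})$, and genericity makes the count exact, yielding $\gamma(I) = \frac{N+1}{N-1}$.

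With these invariants in hand, the upper bounds come from Theorem \ref{mainThm}: part (2) gives $\rho_a(I) \le \omega(I)/\gamma(I)$, and since $I$ is the ideal of a smooth subscheme (a disjoint union of $s$ general lines — note general lines in $\pr^N$ with $N \ge 3$ are pairwise skew, but here $N$ may be $2$, so for $N=2$, $s \le 3/2$ forces $s=1$ and the statement is the trivial $\rho = 1$, while for $N \ge 3$ the lines are genuinely disjoint and smoothness is clear) this is legitimate. Plugging in $\omega(I) = 2$ and the computed $\gamma(I)$ gives $\rho_a(I) \le 2$ when $2s<N+1$ and $\rho_a(I) \le 2\frac{N-1}{N+1}$ when $2s=N+1$; combined with the general bound $\rho_a(I) \ge 1$ and a small check that $2\frac{N-1}{N+1} = 2\frac{s-1}{s}$ when $N+1 = 2s$, this matches the claimed value $\max(1, 2\frac{s-1}{s})$ as an \emph{upper} bound on $\rho_a(I)$, hence on $\rho_a'(I)$ and — more work needed — we also want it as an upper bound on $\rho(I)$ itself. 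For the latter I would either invoke part (3) of Theorem \ref{mainThm} if the symbolic power algebra is Noetherian here (which it should be for these monomial-like ideals, with an explicit $c$), or argue directly that $I^{(m)} \subseteq I^r$ once $m/r$ exceeds the bound, using the explicit description of $I^{(m)}$ for a linearly general union of lines.

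For the lower bound $\rho(I) \ge \max(1, 2\frac{s-1}{s})$ I would exhibit, for each $r$, explicit pairs $(m,r)$ with $m$ close to $2\frac{s-1}{s} r$ and $I^{(m)} \not\subseteq I^r$; concretely, the containment $I^{(m)} \subseteq I^r$ forces $\alpha(I^{(m)}) \ge r\,\alpha(I)$, i.e. $m\,\gamma(I) \gtrsim \alpha(I^{(m)}) \ge 2r$, which is exactly the $\alpha(I)/\gamma(I) \le \rho_a(I)$ half of Theorem \ref{mainThm}(1) once $\alpha(I) = 2$; the content is that $\alpha(I)/\gamma(I) = 2/\frac{N+1}{N-1} = 2\frac{N-1}{N+1} = 2\frac{s-1}{s}$ in the spanning case. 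When $2s < N+1$ we instead get only $\rho_a(I) \ge \alpha(I)/\gamma(I) = 1$, matching $\max(1, \cdot)$. So in both ranges the lower bound from Theorem \ref{mainThm}(1) already equals the upper bound from (2), pinning down $\rho_a(I)$, and with a little extra care (the explicit containment/non-containment statements for unions of general lines) the same value holds for $\rho'_a(I)$ and $\rho(I)$. The main obstacle I anticipate is the exact computation of $\gamma(I)$ in the spanning case $2s = N+1$: one must show the naive dimension count for forms singular to order $m$ along $s$ general linear subspaces is \emph{exact} (no unexpected sections and no unexpected obstructions), which for lines in projective space of the right dimension should follow from a degeneration or from the well-understood geometry of general linear subspaces, but does require a genuine argument rather than a citation.
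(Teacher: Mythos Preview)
Your sandwich strategy via Theorem~\ref{mainThm}(1,2) works in the spanning case $2s=N+1$, since there $\alpha(I)=\omega(I)=2$ and the bounds collapse. But it fails in the non-spanning case $2s<N+1$ with $s\ge 2$: there $\alpha(I)=1$ (a hyperplane contains all the lines), $\omega(I)=2$ (disjoint lines cannot be cut out by linear forms), and $\gamma(I)=1$, so the sandwich reads $1\le\rho_a(I)\le 2$ and says nothing about the target value $2(s-1)/s$. Your sentence ``in both ranges the lower bound from Theorem~\ref{mainThm}(1) already equals the upper bound from~(2)'' is simply wrong once $s\ge 2$ and $2s<N+1$ (e.g.\ $s=3$, $N=6$, where the answer is $4/3$). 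The paper handles this range by a genuinely different idea: writing each monomial as $\mu=\mu_1\mu_2$ with $\mu_1\in k[x_0,\ldots,x_{2s-1}]$ and $\mu_2\in k[x_{2s},\ldots,x_N]$, one has $\mu\in I^{(m)}$ iff $\mu_1\in J^{(m-\deg\mu_2)}$, where $J$ is the ideal of the same $s$ lines viewed in $\pr^{2s-1}$. A short case analysis then gives $\rho(I)=\rho(J)$, reducing to the spanning case.

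Two further points where your outline diverges from what actually works. First, your plan to compute $\gamma(I)$ in the spanning case by proving a dimension count is exact is exactly the obstacle you flag at the end, and the paper avoids it entirely. After putting the $s$ lines on coordinate axes, $I^{(m)}$ is monomial; the membership condition for $\mu$ is $d(\mu)\ge m+d_i(\mu)$ for each $i$, where $d_i(\mu)$ is the degree of $\mu$ in the two variables of $L_i$. Summing over $i$ gives $(s-1)d(\mu)\ge sm$, hence $\alpha(I^{(m)})\ge sm/(s-1)$, and an explicit monomial attains this asymptotically---no genericity or cohomology needed. Second, to upgrade from $\rho_a(I)$ to $\rho(I)$ in the spanning case, the paper does not use part~(3) or an ad hoc containment check. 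It constructs a ring homomorphism $\phi\colon k[\pr^N]\to k[\pr^{s-1}]$ collapsing each pair $x_{2i},x_{2i+1}$ to a single variable $y_i$, so that $\phi(I)=J$ is the ideal of the $s$ coordinate points of $\pr^{s-1}$; one then checks $I^{(m)}\subseteq I^r$ iff $J^{(m)}\subseteq J^r$, and invokes the known value $\rho(J)=2(s-1)/s$ from \cite{BH}.
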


In section \ref{prelims} we recall basic facts that we will need for the proofs.
We give the proofs of Theorem \ref{mainThm} and Corollary \ref{introcor1} in section
\ref{tmt} and the proof of Theorem \ref{introthm2} in section \ref{monomialcases}.
We discuss the problem of computing $\gamma(I)$ in more detail in section \ref{gamma}.

\noindent
{\bf Acknowledgments.}  
This work was facilitated by the Shared Hierarchical
Academic Research Computing Network (SHARCNET:www.sharcnet.ca) and Compute/Calcul Canada.
Computer experiments carried out on {\tt CoCoA} \cite{C} and
{\it Macaulay2} \cite{Mt} were very helpful in guiding our research.
The second author's work on this project
was sponsored by the National Security Agency under Grant/Cooperative
agreement ``Advances on Fat Points and Symbolic Powers,'' Number H98230-11-1-0139.
The United States Government is authorized to reproduce and distribute reprints
notwithstanding any copyright notice.
The third author acknowledges the support provided by NSERC.



\section{Preliminaries}\label{prelims}
Let $I\subsetneq R=k[\pr^N]=k[x_0,\ldots,x_N]$ be a homogeneous ideal. Then $I$ has a
homogeneous primary decomposition, i.e., a primary decomposition
$I=\bigcap_i Q_i$ where each $\sqrt{Q_i}$ is a homogeneous
prime ideal, and $Q_i$ is homogeneous and $\sqrt{Q_i}$-primary 
\cite[Theorem 9, p. 153]{ZS}.
We define the $m$-th {\it symbolic power} of $I$ to be the ideal $I^{(m)}=\bigcap_j P_{i_j}$,
where $I^m=\bigcap_i P_i$ is a homogeneous primary decomposition,
and the intersection $\bigcap_j P_{i_j}$ is over all primary components $P_i$ such that
$\sqrt{P_i}$ is contained in an associated prime of $I$.
Thus $I^{(m)}=\bigcap_{P\in\operatorname{Ass}(I)}(I^mR_P\cap R)$, where $R_P$ is the localization at $P$ and
the intersection is taken over the associated primes $P$ of $I$.
In particular, we see that $I^{(1)}=I$ and that $I^m\subseteq I^{(m)}$.

In Corollary \ref{introcor1}, we are interested in the ideal $I$ of a scheme $X$
which is a union of $s$ disjoint lines $L_i\subset\pr^N$ with $N \geq 3$. 
The $m$-th symbolic power of $I$ in this case is $I^{(m)} = \bigcap_{i=1}^s I(L_i)^m$.

If $I$ is a complete intersection (i.e., generated by a regular sequence),
such as is the case for the ideal of a single line in $\pr^N$ for $N>1$,
then $I^{(r)}=I^r$ for all $r\geq 0$ (see \cite[Lemma 5, Appendix 6]{ZS}), 
hence $\gamma(I)=1$, and, as noted in the discussion before Theorem \ref{mainThm},
$\rho_a(I)=\rho'_a(I)=\rho(I)=1$.
More generally, if $I$ is the ideal of a smooth subscheme $X\subset\pr^N$, then 
$X$ is locally a complete intersection, but powers of ideals which are
complete intersections are unmixed (see \cite[Appendix 6]{ZS}) so the only possible
associated primes for powers of $I$ are the minimal primes 
and the irrelevant prime $M=(x_0,\ldots,x_N)$. 
But $M$ is never an associated prime for the ideal of a subscheme,
so $I^{(m)}$ is obtained from $I^m$ by removing the $M$-primary component
(if any); i.e., $I^{(m)}$ is the {\em saturation} $\operatorname{sat}(I)$ of $I$.
In particular, the degree $t$ homogeneous components $(I^m)_t$ of $I^m$ and 
$(I^{(m)})_t$ of $I^{(m)}$ agree for $t\gg0$. The least $s$ for which 
$(I^m)_t=(I^{(m)})_t$ for all $t\geq s$ is called the \emph{saturation degree} of $I$,
denoted $\operatorname{satdeg}(I^m)$.

It is known that $\operatorname{satdeg}(I^r)\leq \operatorname{reg}(I^r)$,
where $\operatorname{reg}(I^r)$ denotes the (Castelnuovo-Mumford) 
regularity of $I^r$; see \cite[Remark 1.3]{BS}.
Moreover, by \cite[Corollary 3, Proposition 4]{K}, the regularity of $I^r$ is
bounded above by a linear function $\lambda_Ir+c_I$ of $r$
and moreover $\lambda_I\leq\omega(I)\leq \operatorname{reg}(I)$.
(Unfortunately the constant term $c_I$ may be
positive so we know only that the regularity is bounded above by
$\lambda_I r+c_I$ for some $c_I$; we do not know that $\lambda_Ir$ is
an upper bound.)

The bounds given in Theorem \ref{mainThm} involve the asymptotic quantity
$\gamma(I)$, defined for a homogeneous ideal $(0)\neq I\subseteq k[\pr^N]$ 
as $\lim_{m\to\infty}\alpha(I^{(m)})/m$.
For the fact that this limit exists, 
see \cite[Lemma 2.3.1]{BH}. 
It is also known that $\gamma(I)\leq \alpha(I^{(m)})/m$ for all $m\geq 1$.
(This is because $\gamma(I)=\lim_{t\to\infty}\alpha(I^{(tm)})/(tm)$,
but $(I^{(m)})^t\subseteq I^{(mt)}$, hence $\alpha(I^{(tm)})\leq \alpha((I^{(m)})^t)=t\alpha(I^{(m)})$, so
$\alpha(I^{(tm)})/(tm)\leq \alpha(I^{(m)})/m$.)
There are also lower bounds for $\gamma(I)$; indeed, 
$\alpha(I^{(m)})/(m+N-1)\leq \gamma(I)$ (see \cite[Section 4.2]{HaHu}).
In fact, the proof given there works with $N$ replaced by
the largest height $h_I$ of the associated primes of $I$; i.e.,
$$\frac{\alpha(I^{(m)})}{m+h_I-1}\leq \gamma(I).\eqno{(2.1)}$$
Note that $h_I=N-1$ for the case of a radical ideal defining lines in $\pr^N$.
Thus while $\gamma(I)$ is hard to compute, one can estimate
$\gamma(I)$ arbitrarily accurately by computing values of $\alpha(I^{(m)})$.

In the statement of Theorem \ref{mainThm} we divide by $\gamma(I)$.
This is allowed since $\gamma(I)>0$. In fact, $\gamma(I)\geq1$ when $I$ is not
$(0)$ and not $(1)$; see \cite[Lemma 8.2.2]{refPSC}.

Given homogeneous ideals $(0)\neq I\subseteq J$ we clearly have
$\alpha(I)\geq\alpha(J)$. This, and the easy fact that $\alpha(I^r)=r\alpha(I)$, give
a useful criterion for showing $I^{(m)}\not\subseteq I^r$, namely,
if $\alpha(I^{(m)})<r\alpha(I)$, then $I^{(m)}\not\subseteq I^r$.

\renewcommand{\thetheorem}{\thesection.\arabic{theorem}}
\setcounter{theorem}{0}

\section{Proof of Theorem \ref{mainThm} and Corollary \ref{introcor1}}\label{tmt}

Recall that $M=(x_0,\ldots,x_N)$ is the irrelevant ideal in $k[\pr^N]$.

\begin{proof}[Proof of Theorem \ref{mainThm}] 
(1) It is clear from the definitions that $\rho_a(I)\leq\rho'_a(I)\leq\rho(I)$.
As noted in section \ref{prelims} above, $1\leq \gamma(I)\leq \alpha(I)$. Thus
$\alpha(I)/\gamma(I)$ makes sense and we have
$1\leq \alpha(I)/\gamma(I)$.
The bound $\alpha(I)/\gamma(I)\leq\rho(I)$
is \cite[Lemma 2.3.2(b)]{BH}. The proof, which we now recall,
actually shows $\alpha(I)/\gamma(I)\leq\rho_a(I)$.
It is enough to show that $m/r<\alpha(I)/\gamma(I)$
implies $I^{(mt)}\not\subseteq I^{rt}$ for $t\gg0$ and hence that
$m/r\le\rho_a(I)$. But $m/r<\alpha(I)/\gamma(I)$ implies
$m\gamma(I)<r\alpha(I)$, so for all $t\gg0$ we have
$m\alpha(I^{(mt)})/(mt)< r\alpha(I)$, hence $\alpha(I^{(mt)})< rt\alpha(I)$
so $I^{(mt)}\not\subseteq I^{rt}$.
We also have $I^{(h_Ir)}\subseteq I^r$ for all $r$ by \cite{HH}.
But $I^{(m)}\subseteq I^{(h_Ir)}$ if $m\geq h_Ir$, so $h_I\geq \rho(I)$.
Moreover, $h_I\leq N$ except in the case that
$M$ is an associated prime for $I$. But in this case $I^{(m)}=I^m$
follows from the definition, so $\rho(I)=1$. Thus $\rho(I)\leq N$
and hence $\rho(I)\leq h$.

(2) Note that $\omega(I)\geq\alpha(I)$, so $\omega(I)/\gamma(I)\geq\alpha(I)/\gamma(I)\geq 1$.
Thus, if $m/r>\omega(I)/\gamma(I)$, then
$m\gamma(I)>r\omega(I)$ and $m\geq r$. This means for any fixed constant $c$,
we have $mt\gamma(I) >rt\omega(I)+c$ for $t \gg 0$. So,
in particular, for $t\gg0$ we have
$$\alpha(I^{(mt)})\geq mt\gamma(I)>rt\omega(I)+c_I\geq
\operatorname{reg}(I^{rt})\geq \operatorname{satdeg}(I^{rt}).$$
Thus $(I^{(mt)})_l=0$ when $l<\operatorname{satdeg}(I^{rt})$
(since $\operatorname{satdeg}(I^{rt})<\alpha(I^{(mt)})$), but
$I^{(mt)}\subseteq I^{(rt)}$ (since $m\geq r$) so
$(I^{(mt)})_l\subseteq (I^{(rt)})_l=(I^{rt})_l$ when $l\geq 
\operatorname{satdeg}(I^{rt})$; i.e., we have $(I^{(mt)})_l\subseteq (I^{rt})_l$ for all
$l$ (and hence $I^{(mt)}\subseteq I^{rt}$)
for all $t\gg0$. Thus, as noted in Remark \ref{introrem}, we cannot have $m/r<\rho_a(I)$,
so $\rho_a(I)\leq m/r$ whenever $m/r>\omega(I)/\gamma(I)$; i.e.,
$\rho_a(I)\le \omega(I)/\gamma(I)$.
As is well known, $\omega(I)\leq \operatorname{reg}(I)$, 
which finishes the proof of (2).

(3) First note that if $m,r\geq 1$ and if there is an integer $s\geq 0$
such that $m\geq (s+1)c$ and $r\leq (s+1)b$, then
$$I^{(m)}\subseteq I^{((s+1)c)}= (I^{(c)})^{s+1}\subseteq (I^{b})^{s+1}=I^{(s+1)b}\subseteq I^r.$$
We now show that if $m,r\geq 1$ satisfies $r\leq mb/c-b$, then 
$I^{(m)}\subseteq I^r$. This is because we can take $s$ to be the largest
integer such that $sb\leq r$, hence $r<(s+1)b$. But now $m\geq (c/b)(r+b)\geq (c/b)(sb+b)=c(s+1)$.

In particular, if $m,r\geq t$ but $I^{(m)}\not\subseteq I^r$, then
$r>mb/c-b$ so $c/b > (m/r)-(c/r)$ hence $(c/b) + (c/t) > m/r$. Therefore,
$\rho(I,t)\leq (c/b) + (c/t)$, so $\rho'_a(I)=\limsup_t \rho(I,t)\leq \limsup_t (c/b) + (c/t)=c/b$.
\end{proof}

\begin{proof}[Proof of Corollary \ref{introcor1}]
We need some facts about ideals of lines in $\pr^N$.
For the ideal $I$ of the union $X$ of $s$ general lines in $\pr^N$ for $N\geq 3$, \cite{HaHi} shows that
$\dim I_t = \max(0,\binom{t+N}{N}-s(t+1))$. 
Thus, if $s$ and $t$ are such that $\binom{t+N}{N}=s(t+1)$, then $\dim I_t =0$, but
$\binom{t+1+N}{N}-s(t+2)=\binom{t+N}{N}+\binom{t+N}{N-1}-s(t+2)=
\binom{t+N}{N}+\binom{t+N}{N}\frac{N}{t+1}-s(t+2)=s(t+1)+sN-s(t+2)=s(N-1)>0$, so
$\dim I_{t+1}>0$ and hence
$\alpha(I)=t+1$. We claim that $\operatorname{reg}(I)=t+1$ also.
So suppose $t\geq0$ and let
$\mathcal I$ be the sheafification of $I$. We have
$$0\to {\mathcal I}(i)\to {\mathcal O}_{\pr^N}(i)\to {\mathcal O}_{X}(i)\to 0.$$
By \cite{HaHi}, this is surjective on global sections if 
$i\geq t$ and ${\Gamma(\mathcal O}_{\pr^N}(i))\to \Gamma({\mathcal O}_{X}(i))$ is injective otherwise. Now by taking the cohomology of the sheaf sequence above,
it is easy to see that $h^j(\pr^N,{\mathcal I}(i-j))=0$ for all $i\geq t+1$ and $j\geq 1$
but that $h^2(\pr^N,{\mathcal I}(t-2))>0$ when $t=0$, and
that $h^1(\pr^N,{\mathcal I}(t-1))>0$ when $t>0$, for the latter
using the fact that 
$\binom{t-1+N}{N}-st=\binom{t+N}{N}-\binom{t-1+N}{N-1}-s(t+1)+s=
s-\binom{t-1+N}{N-1}=s-\binom{t+N}{N}\frac{N}{t+N}=s-s(t+1)\frac{N}{t+N}<0$.
Thus $\operatorname{reg}(I)=t+1$ by \cite[Exercise 20.20]{refE}.
Since $\alpha(I)=t+1=\operatorname{reg}(I)$,
the result now follows by Theorem \ref{mainThm}.
\end{proof}

\begin{remark}
Even when the bounds given in Theorem \ref{mainThm} do not directly give exact values,
they can be informative when combined with computational evidence. For example, consider the ideal of
$s=4$ general lines in $\pr^3$. In this case $\alpha(I)=3$, $\operatorname{reg}(I)=4$,
and, by \cite{GHVT}, $\gamma(I)=8/3$.
Thus $9/8\leq\rho_a(I)\leq 3/2$ by Theorem \ref{mainThm} (1) and (2).
But computational evidence
using {\it Macaulay2} suggests that $I^{(3t)} =
(I^{(3)})^t$ for all $t > 0$ and also that $I^{(9)}\subseteq I^8$.
This would imply by Theorem \ref{mainThm} (3) that
$\rho'_a(I)\leq 9/8$, and hence $\rho_a(I)=\rho'_a(I)=9/8$.
In fact, evidence from {\it Macaulay2} also suggests that $I^{(3t+i)}
= (I^{(3)})^tI^i$ for $i=1,2$ and all $t > 0$ and that
$I^{(3)}\subseteq I^2$ and $I^{(6)}\subseteq I^5$. Given that
these hold, one can prove that $I^{(m)}\subseteq I^r$ whenever
$m/r\geq 9/8$, which by definition implies that $\rho(I)\leq 9/8$
and thus that $\rho_a(I)=\rho'_a(I)=\rho(I)=9/8$. The proof is to
check cases modulo 3. For example, given any $m\geq1$, we can
write $m=3t+i$, for some $t$ and $0\leq i\leq2$, and we can write
$t=3j+q$ for some $0\leq q\leq 2$. Then
$I^{(m)}=I^{(9j+3q+i)}=(I^{(3)})^{3j+q}I^i=(I^{(9)})^j(I^{(3)})^qI^i$.
Also note that $m/r\geq 9/8$ is equivalent to
$8j+8q/3+8i/9=8m/9\geq r$. If $q=0$, then $I^{(9)}\subseteq I^8$
implies $I^{(m)}=(I^{(9)})^jI^i\subseteq I^{8j}I^i$, but
$I^{8j}I^i\subseteq I^r$, since $8j+i\geq 8j+8q/3+8i/9\geq r$. If
$q=1$, then $I^{(9)}\subseteq I^8$ and $I^{(3)}\subseteq I^2$
imply $I^{(m)}=(I^{(9)})^j(I^{(3)})^qI^i\subseteq I^{8j}I^2I^i$,
but $I^{8j}I^2I^i\subseteq I^r$ since
$8j+8/3+8i/9=8j+8q/3+8i/9\geq r$ implies
$8j+2+i=\lfloor8j+8/3+8i/9\rfloor\geq r$. If $q=2$, then
$I^{(9)}\subseteq I^8$ and $I^{(6)}\subseteq I^5$ imply
$I^{(m)}=(I^{(9)})^j(I^{(3)})^qI^i\subseteq I^{8j}I^5I^i$, but
$I^{8j}I^5I^i\subseteq I^r$ since
$8j+5+i=\lfloor8j+8q/3+8i/9\rfloor\geq r$.
\end{remark}

\renewcommand{\thetheorem}{\thesection.\arabic{theorem}}
\setcounter{theorem}{0}

\section{Proof of Theorem \ref{introthm2}}\label{monomialcases}

\begin{proof}[Proof of Theorem \ref{introthm2}]
If $s=1$, then $I$ is a complete intersection so 
$\gamma(I)=\rho_a(I)=\rho'_a(I)=\rho(I)=1=\max(1,2(s-1)/2)$
as noted above. So hereafter we assume $s>1$.
We begin by computing $\gamma(I)$ for $s=(N+1)/2$ general lines in $\pr^N$
in the case that $N$ is odd (hence $s\geq 2$). By choosing two points on each line, we obtain $N+1$
points of $\pr^N$ which after a change of coordinates we may assume are the coordinate
vertices. The $s$ lines, $L_1,\cdots,L_s$, now become disjoint coordinate lines. 
After reindexing, we may assume $I(L_i)$ is generated by the coordinate variables
$\{x_j: 0\leq j\leq N+1, j\neq 2(i-1),2i-1\}$. 
Since $I^{(m)}=I(L_1)^m\cap\cdots\cap I(L_s)^m$,
we see that $I^{(m)}$ is a monomial ideal, and a monomial $\mu=x_0^{m_0}\cdots x_N^{m_N}$
is in $I^{(m)}$ if and only if $\mu\in I(L_j)^m$ for all $j$.
To get a useful criterion for $\mu\in I(L_j)^m$, 
let $d_i(\mu)=m_{2(i-1)}+m_{2i-1}$ for $1\leq i\leq s$,
and let $d(\mu)=\deg(\mu)=m_0+\cdots+m_N$. 
Clearly $\mu\in I(L_j)^m$ holds if and only if the degree of $\mu$ in the 
variables generating $I(L_i)$
is at least $m$; i.e., if and only if $d(\mu)\geq m+d_i(\mu)$. 
Thus $\mu\in I^{(m)}$ if and only if $d(\mu)\geq m+d_i(\mu)$ holds for all $1\leq i\leq s$.
But $d(\mu)=d_1(\mu)+\cdots+d_s(\mu)$, so summing $d(\mu)\geq m+d_i(\mu)$ over $i$
we conclude that $\mu\in I^{(m)}$ implies $sd(\mu)\geq sm+d(\mu)$ or $d(\mu)\geq sm/(s-1)$.
Thus $\alpha(I^{(m)})\geq sm/(s-1)=(N+1)m/(N-1)$, hence $\gamma(I)\geq (N+1)/(N-1)$. 

To show in fact that $\gamma(I)=(N+1)/(N-1)$, consider the case that $m$ is a multiple of $s-1$,
so let $m=\lambda(s-1)$. We claim that $\alpha(I^{(m)})=\lambda s$.  We already know
$\alpha(I^{(m)})\geq (N+1)m/(N-1)=\lambda s$, so it suffices
to find some element of degree $\lambda s$ in $I^{(m)}$.   Consider $\mu=x_0^{m_0}\cdots x_N^{m_N}$
where $m_0=\cdots =m_N=\lambda/2$ for any even $\lambda$. Then 
$d(\mu)=\lambda s=\lambda(s-1)+\lambda=m+d_i(\mu)$ holds for all $i$, so
$\mu\in I^{(m)}$. Thus $\alpha(I^{(m)})= \lambda s$. Taking the limit of
$\alpha(I^{(m)})/m=\lambda s/(\lambda (s-1))$ as $\lambda\to\infty$ gives
$\gamma(I)=s/(s-1)=(N+1)/(N-1)$, as claimed. Moreover,
since $s=(N+1)/2$, we have $\binom{t+N}{N}=s(t+1)$ for $t=1$,
so, by Corollary \ref{introcor1}, $\rho_a(I)=(t+1)/\gamma(I)=2(N-1)/(N+1)=2(s-1)/s=\max(1,2(s-1)/s)$.

We now show that we also have $\rho(I)=2(s-1)/s$, and hence by Theorem \ref{mainThm}(1) that
$\rho'_a(I)=2(s-1)/s$. Consider the homomorphism 
$\phi:k[\pr^N]=k[x_0,\ldots,x_N]\to k[y_0,\ldots,y_{s-1}]=k[\pr^{s-1}]$
of polynomial rings where $y_0=\phi(x_0)=\phi(x_1)$, $y_1=\phi(x_2)=\phi(x_3)$, etc.
Note that $\phi(I(L_i))=J(p_i)$ where $p_0,\ldots,p_{s-1}$ are the coordinate vertices of $\pr^{s-1}$.
For any monomial $\mu'=y_0^{m_0}\cdots y_{s-1}^{m_{s-1}}\in k[\pr^{s-1}]$, 
define $d'(\mu')=\deg(\mu')=m_0+\cdots+m_{s-1}$ and define $d'_i(\mu')=m_i$.
Thus for any monomial $\mu\in k[\pr^N]$ we have $d'(\phi(\mu))=d(\mu)$
and $d'_i(\phi(\mu))=d_i(\mu)$. 
Therefore, if we set $J=\phi(I)$, then for any monomial $\mu\in k[\pr^N]$,
we have $d'(\phi(\mu))\geq m+d'_i(\phi(\mu))$ if and only if
$d(\mu)\geq m+d_i(\mu)$, so $\mu\in I^{(m)}$ if and only if $\phi(\mu)\in 
J(p_0)^m\cap\cdots\cap J(p_{s-1})^m$. In particular, $J=J(p_0)\cap\cdots\cap J(p_{s-1})$,
and $\phi(I^{(m)})=J^{(m)}$. 

We now see that $I^{(m)}\subseteq I^r$ implies $J^{(m)}=\phi(I^{(m)})\subseteq \phi(I^r)=\phi(I)^r=J^r$.
For the converse, assume $J^{(m)}\subseteq J^r$ and consider a monomial $\mu\in I^{(m)}$.
Then $\phi(\mu)\in J^{(m)}\subseteq J^r$ and $\phi(\mu)$ is a monomial, so we can factor 
$\phi(\mu)$ as $\phi(\mu)=\mu'_1\cdots\mu'_r$ with each $\mu'_i$ being a monomial in $J$.
For each $i$, there is a monomial $\mu_i\in k[x_0,\ldots,x_N]$ with $\phi(\mu_i)=\mu'_i$,
and since $\phi(\mu_i)\in J^{(1)}=J$, we have $\mu_i\in I^{(1)}=I$; i.e., $\mu\in I^r$.
Thus $I^{(m)}\subseteq I^r$ if and only if $J^{(m)}\subseteq J^r$, so
$\rho(I)=\rho(J)$, but $\rho(J)=2(s-1)/s$ by \cite[Theorem 2.4.3(a)]{BH}.

Now consider the case that $s<(N+1)/2$. Let $n=2s-1$. A monomial $\mu=x_0^{m_0}\cdots x_N^{m_N}$
factors as $\mu=\mu_1\mu_2$ where $\mu_1=x_0^{m_0}\cdots x_n^{m_n}$
and $\mu_2=x_{n+1}^{m_{n+1}}\cdots x_N^{m_N}$. Let $\delta = \delta(\mu)=m_{n+1}+\cdots+m_N$.
Then $\mu\in I^{(m)}$ if and only if $\mu_1\in I^{(m-\delta)}$, where we regard
nonpositive powers or symbolic powers as denoting the unit ideal. Similarly,
$\mu\in I^r$ if and only if $\mu_1\in I^{r-\delta}$.
Denote by $J$ the ideal of the $s$ lines regarded as being in $\pr^n$,
where $k[\pr^n]=k[x_0,\ldots,x_n]\subset k[x_0,\ldots,x_N]$.
Then $\mu_1\in I^{(m)}$ if and only if $\mu_1\in J^{(m)}$
and $\mu_1\in I^r$ if and only if $\mu_1\in J^r$.
Also, the $s$ lines in $\pr^n$ have $\rho_a(J)=\rho(J)=2(s-1)/s$ by the 
previously considered case. Now, for any monomial $\mu'\in J^{(m)}\setminus J^r$, we have
$\mu'\in I^{(m)}\setminus I^r$, so $\rho(I)\geq \rho(J)$ and $\rho_a(I)\geq \rho_a(J)$.
On the other hand, say $m$ and $r$ are such that $I^{(m)}\not\subseteq I^r$.
Then there is a monomial $\mu\in I^{(m)}\setminus I^r$ (hence $\delta=\delta(\mu)<r$,
since $\delta(\mu)\geq r$ implies $\mu\in I^r$). If $m<r$, then
$J^{(m)}\not\subseteq J^r$, so $m/r\leq\rho(J)$.
Now assume $m\geq r$. Since $\mu_1\in I^{(m-\delta)}\setminus I^{r-\delta}$,
we see $\mu_1\in J^{(m-\delta)}\setminus J^{r-\delta}$, 
so $m/r\leq(m-\delta)/(r-\delta)\leq \rho(J)$. Thus $m/r\leq\rho(J)$ whenever
we have $I^{(m)}\not\subseteq I^r$, so we conclude $\rho(I)\leq \rho(J)$ hence $\rho_a(I)\leq\rho_a(I)\leq\rho(I)\leq\rho(J)=\rho_a(J)$.
Thus $\rho_a(I)=\rho(I)=\rho(J)=\rho_a(J)$.

Finally, if $s<(N+1)/2$ ($N$ not necessarily odd), then the $s$ lines are contained in a hyperplane,
in particular, in $x_N=0$), so $\alpha(I^{(m)})=m$ and hence $\gamma(I)=1$.
\end{proof}

For the ideal $I$ of two disjoint lines in $\pr^N$, Theorem \ref{introthm2}
shows that $\rho(I)=1$. 
An alternative way to see that $\rho(I)=\rho_a(I)=1$ for the ideal
$I$ of two skew lines in $\pr^N$, is to show that $I^{(m)}=I^m$ for all $m\geq1$.
To see this, let $V\subseteq\{x_0,\ldots,x_N\}$ be a nonempty subset of 
the coordinate variables of $k[\pr^N]$. 
Let $\{V_j\}_{j=1,\ldots,r}$ be a partition of $V$ into nonempty proper disjoint subsets.
Let $I_j=(V_j)$. Since each ideal $I_j$ is a complete intersection,
we have $(I_j)^{(m)}=(I_j)^m$ for all $m\geq 1$. Then we have the following result.

\begin{lemma}\label{disjointvars}
Let $m_1,\ldots,m_r$ be nonnegative integers, not all zero.
Then $I_1^{m_1}\cdots I_r^{m_r}=\bigcap_j (I_j^{m_j})$.
\end{lemma}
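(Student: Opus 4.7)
The approach is to exploit the fact that each $I_j=(V_j)$ is a monomial ideal and that the generating sets $V_1,\ldots,V_r$ are pairwise disjoint, so both sides of the claimed equality are monomial ideals and the containment can be checked one monomial at a time. The inclusion $I_1^{m_1}\cdots I_r^{m_r}\subseteq \bigcap_j I_j^{m_j}$ is the easy one: for each index $j$, the product $I_1^{m_1}\cdots I_r^{m_r}$ factors through $I_j^{m_j}$, and the product of ideals is always contained in each factor, so the product lies in every $I_j^{m_j}$ simultaneously, hence in their intersection.

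For the reverse inclusion, I would take an arbitrary monomial $\mu=x_0^{a_0}\cdots x_N^{a_N}$ lying in $\bigcap_j I_j^{m_j}$ (checking monomials suffices since both sides are monomial ideals). Write $\mu=\mu_0\mu_1\cdots\mu_r$ by grouping the variables according to the partition: for $j\geq 1$, let $\mu_j$ be the subproduct of $\mu$ in the variables from $V_j$, and let $\mu_0$ collect the powers of the variables outside $V$. Because the sets $V_1,\ldots,V_r$ are pairwise disjoint, this factorization is well defined and its factors involve disjoint variable sets.

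Next I would observe that $I_j^{m_j}=(V_j)^{m_j}$ is generated by the monomials of total degree $m_j$ in the variables belonging to $V_j$, so any monomial lying in $I_j^{m_j}$ must have $V_j$-degree at least $m_j$. Applying this to $\mu$ (and using disjointness to conclude that only the variables in $V_j$ contribute to the $V_j$-degree) gives $\deg(\mu_j)\geq m_j$, and hence $\mu_j\in I_j^{m_j}$. Multiplying these factors, $\mu_1\cdots\mu_r\in I_1^{m_1}\cdots I_r^{m_r}$, and then multiplying by $\mu_0\in R$ shows $\mu=\mu_0\mu_1\cdots\mu_r\in I_1^{m_1}\cdots I_r^{m_r}$, which closes the argument.

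The proof is essentially bookkeeping once the right framework (monomials plus disjoint variable groups) is set up; the only place to be careful is in handling the case $m_j=0$, where $I_j^{m_j}$ is the unit ideal and $\mu_j$ may be $1$, and in the harmless role played by $\mu_0$ (variables outside $V$), both of which are easy to absorb. The hypothesis that not all $m_j$ are zero is needed only to ensure the ideals in the statement are proper; it is not used in the monomial comparison itself.
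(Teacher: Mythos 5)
Your proposal is correct and follows essentially the same route as the paper: reduce to a monomial comparison, use the disjointness of the variable blocks $V_1,\ldots,V_r$ to extract from a monomial $\mu\in\bigcap_j I_j^{m_j}$ a factor of degree at least $m_j$ supported on $V_j$ for each $j$, and reassemble. The paper phrases this in terms of choosing a degree-$m_j$ divisor $\mu_j\mid\mu$ for each $j$ and invoking pairwise coprimality to conclude $\mu_1\cdots\mu_r\mid\mu$, whereas you group $\mu$ into its block factors directly and absorb the leftover $\mu_0$; these are just two ways of organizing the same bookkeeping.
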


\begin{proof}
Clearly we have $I_1^{m_1}\cdots I_r^{m_r}\subseteq \bigcap_j (I_j^{m_j})$.
Both ideals are monomial ideals. The former is generated by the monomials of the form
$\mu_1\cdots\mu_r$ where $\mu_j$ is a monomial of degree $m_j$ in the variables $V_j$.
Thus it is enough for any monomial $\mu\in \bigcap_j (I_j^{m_j})$ to show that
$\mu$ is divisible by such a monomial $\mu_1\cdots\mu_r$.
But $\mu\in I_j^{m_j}$ for each $j$ and $I_j^{m_j}$ is generated by
monomials of degree $m_j$ in the variables $V_j$, and so there is such
a monomial $\mu_j$ that divides $\mu$. Since the elements $\mu_1,\ldots,\mu_r$
are pair-wise relatively prime, we see that $\mu_1\cdots\mu_r$ divides $\mu$.
\end{proof}

\begin{remark}\label{2lines}
We now show that $I^{(m)}=I^m$ for all $m\geq1$ whenever $I$ is the ideal
of two skew lines in $\pr^N$.
So let $L_1,L_2\subset \pr^N$ be skew lines.
If $N=3$, by an appropriate choice of coordinates, we may assume 
$I(L_1)=(x_0,x_1)$ and $I(L_2)=(x_2,x_3)$, and $I=I(L_1)\cap I(L_2)$.
By Lemma \ref{disjointvars} we have $I=I(L_1)I(L_2)$ and
$I^{(m)}=I(L_1)^m\cap I(L_2)^m=(I(L_1)I(L_2))^m=I^m$.
If $N>3$, we have
$I(L_1)=(x_0,x_1,x_4,\ldots,x_N)$
and $I(L_2)=(x_2,\ldots,x_N)$, and so $I=(x_0x_2,x_0x_3,x_1x_2,x_1x_3,x_4,\ldots,x_N)$.
Clearly, $I^m\subseteq I^{(m)}$ for all $m\geq 1$, so let $\mu\in I^{(m)}$ be a monomial.
We have a factorization $\mu=\mu_1\mu_2\mu_3$ where $\mu_1\in(x_0,x_1)$,
$\mu_2\in(x_2,x_3)$ and $\mu_3\in(x_4,\ldots,x_N)$. 
Let $\delta_i=\deg(\mu_i)$, $\delta=\deg(\mu)$.
If $\delta_3\geq m$, then $\mu_3\in I^m$, hence $\mu\in I^m$.
Assume $\delta_3<m$. Then $\mu\in I^{(m)}$ implies
$\delta_i\geq m-\delta_3$ for $i=1,2$. Thus
$\mu_1 \in (x_0,x_1)^{m-\delta_3}$ and $\mu_2 \in (x_2,x_3)^{m-\delta_3}$.
By Lemma \ref{disjointvars}, we have $\mu_1\mu_2 \in (x_0,x_1)^{m-\delta_3} \cap
(x_2,x_3)^{m-\delta_3} = [(x_0,x_1)(x_2,x_3)]^{m-\delta_3} = (x_0x_2,x_0x_3,x_1x_2,x_1x_3)^{m-\delta_3}
\subseteq I^{m-\delta_3}$, and
$\mu=\mu_1\mu_2\mu_3\in I^m$.
\end{remark}


\section{Computing $\gamma(I)$}\label{gamma}

Corollary \ref{introcor1} gives an exact answer for $\rho_a(I)$, but it is in terms of $\gamma(I)$,
which it is hard to determine in general. Let $I$ be the ideal of $s$ generic lines in $\pr^N$.
In cases such that $I^{(m)}=I^m$ for all $m\geq 1$, we have 
$\gamma(I)=\alpha(I)$. Thus $\gamma(I)=1$ if $s=1$, since 
$I$ is a complete intersection, so $I^{(m)}=I^m$ for all $m\geq 1$.
When $s=2$, then again $I^{(m)}=I^m$ for all $m\geq 1$ by Remark \ref{2lines},
so $\gamma(I)=\alpha(I)=2$ if $N=3$ and $\gamma(I)=\alpha(I)=1$ if $N>3$. 
If $2s\leq N+1$, then we know $\gamma(I)$ by Theorem \ref{introthm2}.
If $N=s=3$, then we will show in a separate paper \cite{GHVT} that
once again $I^{(m)}=I^m$ for all $m\geq 1$, so $\gamma(I)=\alpha(I)=2$, and,
by exploiting a connection between lines in $\pr^3$ and points in $\pr^1\times\pr^1$,
that $\gamma(I)=8/3$ if $s=4$. (It is not hard to see at least that $\gamma(I)\leq 8/3$.
If $s=4$, then $\alpha(I)=3$ and there is a unique
quadric through any three of the four lines, so taking each combination of three of the four
lines we see that the four quadrics give an element of $I^{(3)}$ of degree 8.
Thus $\alpha(I^{(3)})\leq 8$, so $\gamma(I)\leq \alpha(I^{(3)})/3=8/3$.)

As far as we know, $\gamma(I)$ is not known in any other cases, but one can still
say something. For example, say $N=3$ and $s=5$. 
Then $\alpha(I)=4$, so using the lower bound $\alpha(I^{(m)})/(m+h_I-1)\leq\gamma(I)$
we have $2\leq \gamma(I)$. Using the ten quadrics through 
combinations of three of the five lines gives 
$\alpha(I^{(6)})\leq 20$, so we get $\gamma(I)\leq \alpha(I^{(6)})/6\leq 20/6$.
(Experiments with {\it Macaulay2} suggest that $\alpha(I^{(6)})=20$.
If so, using the lower bound $\alpha(I^{(m)})/(m+h_I-1)\leq\gamma(I)$, 
we would have $20/7\leq\gamma(I)$. As an application,
this would imply by Corollary \ref{introcor1} that
$6/5\leq\rho_a(I)\leq 7/5$. In fact, {\it Macaulay2} calculations suggest that 
$\alpha(I^{(12)})=40$ which would give $40/13\leq \gamma(I)$ and 
$6/5\leq\rho_a(I)\leq 13/10$.)

There is also the question of what we might expect $\gamma(I)$ to be for the ideal $I$
of $s$ generic lines in $\pr^N$. The analogous question for ideals of generic points in $\pr^N$
is also still open in general but there are conjectures for what the answer should be. 
Nagata \cite{Na} posed a still open conjecture for ideals of points in $\pr^2$, in connection with his
solution of Hilbert's 14th problem. In our terminology we can paraphrase it as follows.

\begin{conjecture}[Nagata's Conjecture]
Let $I\subset k[\pr^2]$ be the ideal of
$s\geq 10$ generic points of $\pr^2$.
Then $\gamma(I)=\sqrt{s}$.
\end{conjecture}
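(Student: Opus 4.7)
The plan is to establish $\gamma(I)=\sqrt{s}$ by treating the two inequalities separately, but the reader should know upfront that this is Nagata's famous still-open conjecture; what follows describes the strategy that succeeds when $s$ is a perfect square (Nagata, 1959), together with a candid account of where the general case breaks down.

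For the upper bound $\gamma(I)\le\sqrt{s}$ when $s=n^{2}$, I would specialize the $s$ generic points to an $n\times n$ grid. The union of $n$ horizontal and $n$ vertical lines carrying the grid has its $m$th power containing an element of degree $mn$ vanishing to order $m$ at each grid point, so $\alpha(I^{(m)})\le mn$ for the specialized ideal. Since $\alpha(I^{(m)})$ is upper semicontinuous in the position of the points, the same bound holds for the generic configuration, and dividing by $m$ yields $\gamma(I)\le n=\sqrt{s}$. For non-square $s$, I would embed the $s$ points into a generic $\lceil\sqrt{s}\rceil^{2}$-point configuration with ideal $I'\subseteq I$, so that $\alpha(I^{(m)})\le\alpha((I')^{(m)})\le m\lceil\sqrt{s}\rceil$; this only gives $\gamma(I)\le\lceil\sqrt{s}\rceil$, which is weaker than the conjecture.

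For the lower bound $\gamma(I)\ge\sqrt{s}$, I would pass to the blow-up $\pi\colon Y\to\pr^{2}$ at the $s$ points, where with $H=\pi^{*}\mathcal{O}_{\pr^{2}}(1)$ and exceptional divisors $E_{1},\dots,E_{s}$ the quantity $\alpha(I^{(m)})$ is the least $d$ for which $dH-m(E_{1}+\cdots+E_{s})$ is effective on $Y$. The desired bound $d\ge m\sqrt{s}$ is then a Seshadri-type positivity statement, equivalent to saying that the multi-point Seshadri constant of $H$ at $s$ very general points equals $1/\sqrt{s}$. Nagata's original method combines a Cremona-based induction with specialization of the $s$ points onto a plane curve of controlled degree, and succeeds for $s\ge 16$ a perfect square.

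The hard part, and the obstacle that has kept the conjecture open for more than sixty years, is this very lower bound when $s$ is not a perfect square. Every known technique---Cremona induction, multiplier ideals, symplectic packing, tropical degeneration, Seshadri-constant estimates from explicit test configurations---falls strictly short of $\sqrt{s}$ in this range, and I would not expect the outline above to bridge the gap without a genuinely new ingredient. The proposal should therefore be read as a roadmap for the perfect-square case, together with an explicit signpost to the point where the general case is obstructed.
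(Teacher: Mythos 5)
You chose honesty: the statement in question is Nagata's Conjecture, which the paper merely records (with attribution to \cite{Na}) and does not prove, because no proof is known. So there is no proof in the paper to compare against, and your candid framing of the proposal as a roadmap plus a signpost to the open lower bound is the right posture. That said, one concrete error in your outline is worth flagging: the semicontinuity runs the opposite way from what you assert. For fixed $m$ and $t$, the dimension of $(I^{(m)})_t$ is \emph{upper} semicontinuous in the position of the points, so it is \emph{minimal} for generic points; equivalently, $\alpha(I^{(m)})$ is \emph{maximal} (lower semicontinuous) at a generic configuration. Specializing the $s=n^2$ points to a grid can therefore only make $\alpha$ drop, so exhibiting the degree-$mn$ product of $n$ lines to the $m$th power bounds $\alpha$ for the grid, not for the generic configuration; it does not by itself give $\gamma(I)\le\sqrt{s}$ at a generic configuration.

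The upper bound $\gamma(I)\le\sqrt{s}$ is, in fact, the easy direction, and the paper's own discussion (just after the two conjectures in Section 5) gives the standard argument: the linear system of forms of degree $t$ vanishing to order $m$ at $s$ points always has dimension at least the expected value $\binom{t+2}{2}-s\binom{m+1}{2}$, which is positive once $t^2 \gtrsim sm^2$; this forces $\alpha(I^{(m)})\lesssim m\sqrt{s}$ and hence $\gamma(I)\le\sqrt{s}$ with no need for specialization. Your account of the lower bound --- the passage to the blow-up, the reformulation as effectivity of $dH-m\sum E_i$ and as a multi-point Seshadri constant computation, Nagata's proof for perfect squares $s=n^2$ with $n\geq 4$, and the persistent failure of every known technique to close the gap for non-squares --- is essentially accurate and is why the statement appears in this paper as a conjecture rather than a theorem.
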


A generalization has been given by Iarrobino \cite{I} and Evain \cite{Evain} for generic points in
$\pr^N$, which we paraphrase in the following way:

\begin{conjecture}
Let $I\subset k[\pr^N]$ be the ideal of
$s\gg0$ generic points of $\pr^N$.
Then $\gamma(I)=\sqrt[N]{s}$.
\end{conjecture}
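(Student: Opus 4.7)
This is a conjectural generalization of Nagata's famous open conjecture, so what follows is a plan of attack rather than a proof; a full proof is not currently within reach. The \emph{upper bound} $\gamma(I)\le\sqrt[N]{s}$ is the easy direction and follows from a parameter count. For any configuration of $s$ points in $\pr^N$ and any integers $d,m\ge 1$, vanishing to order $m$ at a single point imposes at most $\binom{m+N-1}{N}$ linear conditions on forms of degree $d$, so
\[
\dim (I^{(m)})_d \ge \binom{d+N}{N} - s\binom{m+N-1}{N}.
\]
Asymptotically the right-hand side becomes positive once $d/m > \sqrt[N]{s}(1+o(1))$, giving $\alpha(I^{(m)})/m \le \sqrt[N]{s}(1+o(1))$ and hence $\gamma(I)\le\sqrt[N]{s}$. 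This half of the conjecture in fact holds for arbitrary points, not just generic ones, and does not require $s\gg 0$.

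For the hard \emph{lower bound} $\gamma(I)\ge\sqrt[N]{s}$, the plan would be a degeneration and semicontinuity argument. Since $\dim(I^{(m)})_d$ is upper semicontinuous as the $s$ points vary in $(\pr^N)^s$, the value $\alpha(I^{(m)})$ for the generic configuration is at least the value for any specialization. Thus it would suffice to exhibit a single (possibly very special) configuration of $s$ points attaining $\alpha(I^{(m)})\ge m\sqrt[N]{s}$ in the limit. In Nagata's original plane case this was carried out for $s=k^2$ by specializing to a $k\times k$ lattice on a smooth plane cubic and applying B\'ezout. One would try a higher-dimensional analog: for $s=k^N$, specialize to a grid contained in the intersection of $N$ suitable hypersurfaces and use an intersection-theoretic bound, or pursue a toric degeneration of $\pr^N$ together with the $s$ points. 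Alternatively, reformulate $\gamma(I)$ as the reciprocal of the multi-point Seshadri constant of $\mathcal O_{\pr^N}(1)$ at the $s$ points and attack via positivity/symplectic packing techniques.

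The \emph{main obstacle} is exactly the content of the Nagata conjecture, which has resisted proof for over sixty years even when $N=2$. For values of $s$ that are not perfect $N$-th powers, no known specialization delivers the sharp bound, and in dimension $N\ge 3$ the difficulty compounds: slicing by a generic hyperplane reduces to a lower-dimensional problem but loses too much to reach $\sqrt[N]{s}$, and the natural candidate degenerations of $\pr^N$ have not been paired with a B\'ezout-style bound precise enough to match the conjectured asymptotic. A genuinely new geometric ingredient---likely from Seshadri constants on blowups, symplectic packing, or tropical/toric methods---appears to be required. The honest proposal therefore reduces to: identify the right degenerate configuration (ideally one where the $s$ points lie on a variety whose intersection numbers can be controlled), verify the bound there by direct calculation, and let semicontinuity transport it to the generic case.
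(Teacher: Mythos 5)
Your treatment matches the paper's: the statement is a conjecture (due to Iarrobino and Evain, generalizing Nagata), and the paper itself only offers the same parameter-count motivation you give, namely that $\dim(I^{(m)})_d \ge \binom{d+N}{N}-s\binom{m+N-1}{N}$ yields $\gamma(I)\le\sqrt[N]{s}$ asymptotically, with the reverse inequality left open. Your identification of the easy upper bound, of the open lower bound as the real content, and of the need to exclude small $s$ (the paper notes the inequality is known to fail in exceptional cases) is exactly the paper's own framing.
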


The motivation for these conjectures is that if $I$ is the ideal of $s$ generic points $p_i$
in $\pr^N$, then it is easy to check that 
$\dim (I(p_i)^m)_t = \max(0,\binom{t+N}{N}-\binom{m+N-1}{N})$.
It follows that $\dim (I^{(m)})_t\geq \max(0,\binom{t+N}{N}-s\binom{m+N-1}{N})$.
Thus $\alpha(I^{(m)})$ is at most the least $t$ such that $\binom{t+N}{N}>s\binom{m+N-1}{N}$.
An asymptotic analysis now shows that $\gamma(I)\leq\sqrt[N]{s}$, and the naive hope is that
the other inequality also holds.
Since it's known that it can fail for certain exceptional cases,
the conjecture is actually that the other inequality holds as long as $s$ is big enough,
where how big depends on $N$. 

It is of interest to see what we get if we apply the same reasoning to the problem
of determining $\gamma(I)$ for ideals $I$ of generic lines in $\pr^N$.
Let $L$ be a line in $\pr^N$. We can regard $I(L)$ as being 
$I(L)=(x_2,\ldots,x_N)\subset k[\pr^N]$. We can determine
$\dim (I(L)^m)_t$ for each $t\geq 0$ by counting the number of monomials
in $I(L)^m$ of degree $t$. Of course, $\dim (I(L)^m)_t=0$ for $t<m$. 
For $t\geq m$ and $N=3$, the result is
$\dim (I(L)^m)_t = \binom{t+3}{3}-\big((t+2)-(2m+1)/3\big)\binom{m+1}{2}$.
(Here is the argument. The monomials $x_0^ax_1^bx_2^cx_3^d$
of degree $t$ not in $I(L)^m$ are those for which
$c+d<m$. There are $(t-i+1)(i+1)$ monomials of degree $t$
such that $c+d=i$. Thus the number of monomials of degree $t$
not in $I(L)^m$ is 
\[
\begin{split}
\sum_{0\leq i<m}(t-i+1)(i+1)&=\sum_{0\leq i<m}(t+2-(i+1))(i+1)\\
&=(t+2)\sum_{0\leq i<m}(i+1)-\sum_{0\leq i<m}(i+1)^2\\
&=(t+2)\binom{m+1}{2}-(2m+1)\binom{m+1}{2}/3.
\end{split}
\]
Subtracting this from $\binom{t+3}{3}$ thus gives $\dim (I(L)^m)_t$.)

Thus for the ideal $I$ of $s$ distinct lines in $\pr^3$ 
we have $\dim (I^{(m)})_t =0$ for $t<m$ and for $t\geq m$ we have
$$\dim (I^{(m)})_t \geq 
\max\Bigg(0,\binom{t+3}{3}-s\big((t+2)-(2m+1)/3\big)\binom{m+1}{2}\Bigg).\eqno{({}^{\circ})}$$

We can use this to get an upper bound on $\gamma(I)$ when $s$ is not 
too small. To do so, let
$\tau=g$ be the largest real root of $\tau^3-3s\tau+2s$. It is easy to see that 
$\sqrt{3s}-(3/4)<g<\sqrt{3s}$ as long as $s\geq 1$.

\begin{lemma}\label{gammalemma}
If $t/m>g$, then $\binom{it+3}{3}-s\big((it+2)-(2im+1)/3\big)\binom{im+1}{2}>0$
for $i\gg0$, while for $1\leq t/m<g$, 
$\binom{t+3}{3}-s\big((t+2)-(2m+1)/3\big)\binom{m+1}{2}<0$ if $s\geq 17$.
\end{lemma}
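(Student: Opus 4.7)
The plan is to first rewrite the expression so that the cubic $\tau^3-3s\tau+2s$ from the hypothesis on $g$ becomes visible. Setting $\tau=t/m$ and expanding the binomial coefficients directly, I would verify the identity
$$6\Bigl[\binom{t+3}{3}-s\bigl((t+2)-\tfrac{2m+1}{3}\bigr)\binom{m+1}{2}\Bigr] \;=\; m^3 p(\tau)+m^2 q_1(\tau)+m\,q_2(\tau)+6,$$
where $p(\tau)=\tau^3-3s\tau+2s$, $q_1(\tau)=6\tau^2-3s\tau-3s$, and $q_2(\tau)=11\tau-5s$. This is the computational core; the rest is sign analysis.

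For the first assertion, assume $\tau=t/m>g$. Since $g$ is the largest real root of the monic cubic $p$, we have $p(\tau)>0$. Passing from $(t,m)$ to $(it,im)$ preserves $\tau$ and transforms the right-hand side of the identity into $i^3 m^3 p(\tau)+i^2 m^2 q_1(\tau)+im\,q_2(\tau)+6$, a cubic in $i$ with positive leading coefficient. So the expression is positive for $i\gg 0$.

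For the second assertion I would establish three sign facts, each valid for $s\geq 17$ and $\tau\in[1,g]$. First, $p(\tau)\leq 0$: because $p(1)=1-s<0$ for $s\geq 2$, the middle real root of $p$ lies strictly below $1$, so $[1,g]$ sits inside the interval on which $p$ is nonpositive. Second, $q_1(\tau)\leq 0$: the positive root of the quadratic $q_1$ is $\bigl(3s+\sqrt{9s^2+72s}\bigr)/12$, which exceeds $\sqrt{3s}$ for $s\geq 12$, while $g<\sqrt{3s}$ by the bound preceding the lemma. Third, $q_2(\tau)<-6$: since $q_2$ is increasing, $q_2(\tau)\leq q_2(g)<11\sqrt{3s}-5s$, and the inequality $11\sqrt{3s}<5s-6$ rearranges to $25s^2-423s+36\geq 0$, whose positive root lies just below $17$. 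Combining the three facts, for $m\geq 1$ and $\tau\in[1,g)$ I would conclude
$$6f(t,m)\;\leq\;m\,q_2(\tau)+6\;<\;-6m+6\;\leq\;0,$$
which gives the desired strict inequality.

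The main obstacle is pinning down the constant $s=17$. The first two sign facts have ample slack (they go through already for $s\geq 12$ or so), but the third is tight: the threshold $s=17$ is exactly the smallest integer at which $11\sqrt{3s}<5s-6$ begins to hold, and this inequality is forced precisely by the need to defeat the constant term $+6$ in the expansion when $m=1$. So the computation that isolates $s=17$ is essentially the only nontrivial piece of the argument.
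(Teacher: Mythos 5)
Your proof is correct and follows essentially the same route as the paper: the identity $6f(t,m)=m^3p(\tau)+m^2q_1(\tau)+m\,q_2(\tau)+6$ with $\tau=t/m$, positivity of the leading coefficient $p(\tau)$ when $\tau>g$ for the first assertion, and term-by-term sign analysis on $[1,g)$ for the second, with the threshold $s\geq 17$ pinned down by the requirement $11\sqrt{3s}\leq 5s-6$ (the paper bounds $q_1$ by factoring $6\tau^2-3s\tau=3\tau(2\tau-s)$ rather than via the quadratic formula, but both give $s\geq 12$; otherwise the arguments coincide).
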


\begin{proof}
It is convenient to substitute $m\tau$ into
$\binom{it+3}{3}-s\big((it+2)-(2im+1)/3\big)\binom{im+1}{2}$ for $t$.
Doing so (and multiplying by 6 to clear denominators) gives the following polynomial in $i$:
$$i^3m^3(\tau^3-3s\tau+2s)+i^2m^2(6\tau^2-3s\tau-3s)+im(11\tau-5s)+6.$$
The condition $t/m>g$ is now equivalent to $\tau>g$,
which means the leading coefficient of the polynomial is positive,
hence for $i\gg0$ the polynomial is positive.

Now assume $1\leq t/m<g$. Substituting $\tau=mt$ into 
$\binom{t+3}{3}-s\big((t+2)-(2m+1)/3\big)\binom{m+1}{2}$ for $t$
gives 
$$m^3(\tau^3-3s\tau+2s)+m^2(6\tau^2-3s\tau-3s)+m(11\tau-5s)+6.$$
It's easy to check that $\tau^3-3s\tau+2s\leq 0$ for $1\leq \tau<g$.
Also, $6\tau^2-3s\tau-3s<0$ for $1\leq \tau<g$ if $s\geq 12$
(since $6\tau^2-3s\tau\leq 0$ for $0\leq\tau\leq s/2$, and 
$s/2\geq\sqrt{3s}\geq g$ for $s\geq 12$).
Finally, $11\tau-5s\leq 0$ for $\tau\leq 5s/11$, and $5s/11\geq \sqrt{3s}>g$
for $s\geq (11\sqrt{3}/5)^2=14.52$. To accommodate the constant
term, 6, of the polynomial, we actually want $11\tau-5s+6 \leq 0$,
which occurs for $\tau\leq (5s-6)/11$, and we have $(5s-6)/11\geq \sqrt{3s}$
if $s\geq 17$. Thus each term of 
$\binom{t+3}{3}-s\big((t+2)-(2m+1)/3\big)\binom{m+1}{2}$ is nonpositive and one is
negative if $1\leq t/m<g$ and $s\geq 17$. 
\end{proof}

\begin{corollary}
Let $I$ be the ideal of $s\geq 1$ distinct lines in $\pr^3$.
Then $\gamma(I)\leq g$, where $g$ is the largest real root of
$\tau^3-3s\tau+2s$.
In particular, $\gamma(I)\leq \sqrt{3s}$, so $\rho_a(I)\geq \alpha(I)/\sqrt{3s}$
for any $s$ distinct disjoint lines in $\pr^3$. 
\end{corollary}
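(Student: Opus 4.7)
The plan is to feed Lemma \ref{gammalemma} and the inequality $({}^{\circ})$ into the definition of $\gamma(I)$. Specifically, to bound $\gamma(I)$ from above it is enough to show that for every rational number $\tau = t/m$ strictly larger than $g$, one has $\gamma(I) \leq \tau$.

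First I would fix such a rational $\tau = t/m > g$ with $t,m$ positive integers. By Lemma \ref{gammalemma}, for all $i\gg0$ the quantity
\[
\binom{it+3}{3} - s\bigl((it+2) - (2im+1)/3\bigr)\binom{im+1}{2}
\]
is strictly positive. Combined with the inequality $({}^{\circ})$ applied with exponent $im$ and degree $it$, this yields $\dim (I^{(im)})_{it} > 0$, and therefore $\alpha(I^{(im)}) \leq it$ for all $i\gg0$. Dividing by $im$ gives $\alpha(I^{(im)})/(im) \leq t/m = \tau$ for $i\gg0$. Since $\gamma(I)$ equals the limit of $\alpha(I^{(j)})/j$ (and in particular of the subsequence $\alpha(I^{(im)})/(im)$), we conclude $\gamma(I) \leq \tau$.

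Taking the infimum over all rationals $\tau > g$, and using the fact that $g$ is a real number approached from above by rationals, we obtain $\gamma(I) \leq g$. The bound $g < \sqrt{3s}$ was recorded in the discussion preceding Lemma \ref{gammalemma}, so $\gamma(I) \leq \sqrt{3s}$. Finally, Theorem \ref{mainThm}(1) gives $\alpha(I)/\gamma(I) \leq \rho_a(I)$, and substituting the bound on $\gamma(I)$ yields $\rho_a(I) \geq \alpha(I)/\sqrt{3s}$.

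No real obstacle is expected; the only subtlety is the passage from the lemma's statement (which fixes a single rational ratio $t/m$ and lets $i\to\infty$) to a bound on the limit $\gamma(I) = \lim_j \alpha(I^{(j)})/j$. This is handled by the standard observation that the limit exists (as noted in section \ref{prelims}) and therefore equals the limit along any subsequence, in particular along the arithmetic progression $j = im$.
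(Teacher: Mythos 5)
Your proof is correct and follows essentially the same route as the paper: Lemma \ref{gammalemma} together with inequality $({}^{\circ})$ gives $\alpha(I^{(im)}) \leq it$ for $i\gg 0$ whenever $t/m > g$, hence $\gamma(I) \leq t/m$ along that subsequence, and letting $t/m \to g^+$ through rationals gives $\gamma(I) \leq g$; the bounds $g < \sqrt{3s}$ and Theorem \ref{mainThm}(1) then finish as you say. You spell out slightly more explicitly the use of $({}^{\circ})$ and the density of the rationals, but the argument is the same.
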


\begin{proof}
From Lemma \ref{gammalemma} we see that
$\alpha(I^{(im)})\leq it$ for $i\gg 0$ whenever $t/m>g$.
Thus $\gamma(I)=\lim_{i\to\infty}\alpha(I^{(im)})/(im)\leq t/m$ whenever
$t/m>g$, hence $\gamma(I)\leq g$.
The bound on $\rho_a(I)$ follows by Theorem \ref{mainThm}(1).
\end{proof}

If the $s$ lines are generic and $s\gg0$,
we might hope that $({}^\circ)$ is an equality for all $m,t\geq 1$.
Certainly it is not an equality for all $s, m$ and $t$, since  
for the ideal $I$ of $s=3$ general lines in 
$\pr^3$ we know (as noted above) that $I^{(m)}=I^m$,
but $\dim(I)_{2}=1$ and hence $\dim(I^{(m)})_{2m}=\dim(I^m)_{2m}=1$. However, 
$\binom{t+3}{3}-s\big((t+2)-(2m+1)/3\big)\binom{m+1}{2}<0$
for $t=2m$ and $s=3$ when $m>1$.
But suppose for $s$ generic lines in $\pr^3$ for each $s\gg0$ it were true that
$({}^\circ)$ is an equality for all $m,t\geq 1$;
then, by Lemma \ref{gammalemma}, it would follow that
$\alpha(I^{(m)})\geq gm$ for $m\geq1$ and hence that
$\gamma(I)\geq g$. Since we know from above that
$\gamma(I)\leq g$, we would have $\gamma(I)=g$.

It is therefore tantalizing to make the following conjecture:

\begin{conjecture}
There is an integer $q$ such that, for the ideal $I$ of $s\geq q$
sufficiently general lines (say $s$ generic lines) in $\pr^3$,
$\gamma(I)$ is equal to the largest real root $\tau=g$ of $\tau^3-3s\tau+2s$.
\end{conjecture}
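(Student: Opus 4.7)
The upper bound $\gamma(I)\leq g$ has already been established in the corollary preceding the conjecture, so my proof would reduce to showing the matching lower bound $\gamma(I)\geq g$. As noted in the discussion just before the conjecture, this would follow if one can show that the inequality $({}^\circ)$ is actually an equality for $s$ generic lines in $\pr^3$ once $s$ is large enough, i.e.\ that
\[
\dim(I^{(m)})_t=\max\!\left(0,\,\binom{t+3}{3}-s\bigl((t+2)-(2m+1)/3\bigr)\binom{m+1}{2}\right)
\]
for all $m,t\geq 1$. So my overall plan is: reduce the conjecture to this Hilbert-function statement, then attack the Hilbert-function statement by a specialization argument.

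The plan for the Hilbert-function equality is a Horace-style degeneration, following the strategy by which Alexander--Hirschowitz (and Hartshorne--Hirschowitz for lines) proved postulation results. The idea is to degenerate some of the $s$ generic lines to lie on a fixed hyperplane $H\cong\pr^2$, producing a limit scheme whose ideal sits in a short exact sequence $0\to I_{\text{res}}(-1)\to I_{\text{limit}}\to I_{\text{trace}}\to 0$ on $\pr^3$, where the trace lives on $H$ and the residual is again a union of (fat) lines in $\pr^3$ with one line absorbed into $H$. Induction on $s$ and $m$ would then play the trace off against the residual, using semicontinuity to conclude that if the limit has the expected dimension, so does the generic configuration. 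The bookkeeping is by now standard in outline: for each pair $(s,m)$ one tries to choose how many lines to specialize onto $H$ so that both the trace problem (fat points and lines in $\pr^2$) and the residual problem have the inductively known expected dimensions.

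The hard part, and the reason this is only a conjecture, will be controlling the exceptional cases. Even for generic points in $\pr^2$ the full Alexander--Hirschowitz list of exceptions is delicate, and for generic lines in $\pr^3$ the analogue is not known; small-$s$ configurations such as the $s=3$ case noted in the excerpt (where $I^{(m)}=I^m$ forces $\dim(I^{(m)})_{2m}\geq 1$ while the naive count is negative) show that an unconditional statement must fail and that a quantitative threshold $q$ really is needed. Thus the main obstacle is twofold: first, finding a clean inductive set-up in which the specialization onto $H$ produces residuals and traces both of which fall under the induction hypothesis; and second, establishing a threshold $q=q(m,t)$ beyond which no sporadic exceptional configurations survive. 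I would expect the first difficulty to be handled by a two-parameter induction on $(s,m)$ combined with a careful count of how the binomial terms in $({}^\circ)$ split across the exact sequence, while the second would require a separate argument bounding the dimensions of potential exceptional components in the Hilbert scheme of unions of lines, in the spirit of the differential Horace method of Alexander--Hirschowitz.
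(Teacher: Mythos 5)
This statement is a \emph{conjecture}, not a theorem: the paper offers no proof, only the heuristic discussion preceding the conjecture that you have essentially reproduced. So there is no argument of the authors' to compare against, and your ``proof'' cannot be checked for correctness against one.

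What you have done correctly is to isolate the reduction the authors themselves indicate: the corollary above gives $\gamma(I)\le g$, and Lemma \ref{gammalemma} shows that if the bound $({}^\circ)$ were an equality for $s$ generic lines with $s$ large (in fact $s\ge 17$ is what the lemma requires), then $\dim(I^{(m)})_t=0$ for $m\le t<gm$, whence $\alpha(I^{(m)})\ge gm$ and $\gamma(I)\ge g$. Two small corrections to how you state this. First, $({}^\circ)$ is only asserted for $t\ge m$; for $t<m$ the right-hand side of $({}^\circ)$ can be positive while $\dim(I^{(m)})_t=0$, so your ``for all $m,t\ge 1$'' is too strong. Second, you do not actually need the full postulation equality in $({}^\circ)$: you only need the \emph{vanishing} $\dim(I^{(m)})_t=0$ in the range $m\le t<gm$, which is strictly weaker and might be more tractable. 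Observing that distinction would tighten your plan.

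The Horace-style degeneration you sketch is indeed the standard strategy --- it is how Hartshorne--Hirschowitz handled the $m=1$ case for lines, and how Alexander--Hirschowitz handled fat points. But you are candid that you have only a program, not a proof: you have not constructed the inductive scheme, not identified which lines to specialize onto $H$, not verified that the trace and residual both fall under a workable inductive hypothesis, and not controlled the exceptional configurations. These are precisely the obstacles that make this an open conjecture rather than a theorem, and you rightly say so. Your submission is therefore best read as a restatement of why the conjecture is plausible and where a proof would have to go, not as a proof. That is a reasonable thing to write, but it should be presented as such and not as a proof attempt: as it stands there is no step one could check, only a description of steps one would need to carry out.
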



\end{document}